\documentclass[12pt,reqno]{amsart}

\usepackage{graphicx}
\graphicspath{ {./images/} }
\usepackage{bm}

\usepackage{color}

\usepackage{amssymb}

\usepackage{amsfonts}

\usepackage{amsmath}

\usepackage[all]{xy}

\usepackage[colorlinks=true, allcolors=blue]{hyperref}

\newtheorem{theorem}{Theorem}[section]

\newtheorem{corollary}[theorem]{Corollary}

\newtheorem{lemma}[theorem]{Lemma}

\newtheorem{proposition}[theorem]{Proposition}

\newtheorem{conjecture}[theorem]{Conjecture}

\newtheorem{Definition}[theorem]{Definition}

\newtheorem{Example}[theorem]{Example}

\newtheorem{Remark}[theorem]{Remark}

\newcommand{\R}{\mathbb{R}}
\newcommand{\Rp}[1]{\R_{+}^{#1}}
\newcommand{\Rpp}[1]{\R_{++}^{#1}}
\newcommand{\M}[2]{M_{#1,#2}}
\newcommand{\one}{\mathbf{1}}
\newcommand{\ip}[2]{\left\langle #1,#2\right\rangle}
\newcommand{\Span}{\operatorname{span}}

\address{Vatsalkumar N. Mer \\ Institute for Industrial and Applied Mathematics, Chungbuk National University, Cheongju 28644, Korea}
\email{vnm232657@gmail.com}

\begin{document}

\title[Linear preserver problems]{Invertible linear preservers of semipositive matrices - a dimension free approach}
\author[Vatsalkumar]{Vatsalkumar N. Mer }


\begin{abstract}
An \(m\times n\) real matrix \(A\) is said to be semipositive if there
exists a vector \(x>0\) such that \(Ax>0\), where the inequalities are
understood componentwise. Dorsey et al. \cite{DGJJT} conjectured that any invertible
linear map that $L$ that leaves invariant the collection of all semipositive matrices is always in the standard form
$A \mapsto XAY$ for some row positive matrix $X$ and inverse nonnegative matrix $Y$. This was settled in
\cite{JM} when $m \geq n$. Our aim in this paper is to settle the case when \(m<n\) of the above conjecture. In fact, our proof works for arbitrary positive integers \(m\) and \(n\). The main ingredient is a classification of affine subspaces of the largest possible dimension contained in the set of semipositive matrices.

\vspace{5mm}

\noindent {\bf Mathematics Subject Classification} (2020): 15A86, 15B48.

\noindent {\bf Keywords}: Semipositive matrices; row positive matrices; inverse nonnegative
matrices; linear preserver problems; affine subspaces; rank-one preservers.
\end{abstract}

\maketitle

\section{Introduction}
 Let \(M_{m,n}(\mathbb R)\) denote the set of all \(m\times n\) matrices over \(\R\). When \(m=n\), this set will 
 also be denoted by \(M_n (\mathbb{R})\).  For \(n\geq1\), let
$ \Rp{n}=\{x\in\R^n:x_i\geq0\ \text{for every }i\},
 \ \Rpp{n}=\{x\in\R^n:x_i>0\ \text{for every }i\}.
$A matrix \(A\in M_{m,n}(\mathbb R)\) is said to be \emph{semipositive} if there exists a
vector \(x \in \Rp{n}\) such that \(Ax \in \Rpp{m}\). We denote the set of all semipositive matrices 
by $S_{m,n}=S(\R_+^n,\R_+^m).$ Semipositive matrices were studied systematically in
\cite{CSM, prs-1}; they also occur naturally in the theory of nonnegative
matrices and \(M\)-matrices (see, for instance,
\cite[Chapter~6]{BermanPlemmons}).  For further geometric properties
of semipositive matrices, one may refer to \cite{Tsatsomeros} and the
references cited therein.

For a field \(\mathbb F\), a linear preserver on
\(M_{m,n}(\mathbb F)\) is a linear map that preserves a specified
property or relation.  Given a subset \(\mathcal S\) of
\(M_{m,n}(\mathbb F)\), one may ask for all linear maps \(L\) such
that either \(L(\mathcal S)\subseteq\mathcal S\) or
\(L(\mathcal S)=\mathcal S\).  The former maps are called \emph{into
preservers}, while the latter are called \emph{onto} or \emph{strong
preservers}.  Linear preserver problems have a long history in matrix
theory; see \cite{LiPierce} for an overview.  The present manuscript
concerns invertible into preservers of \(S_{m,n}\).

Recall that a square matrix is \emph{row positive} if it is
entrywise nonnegative and every row contains a nonzero entry.  An
invertible square matrix \(Y\) is said to be \emph{inverse
nonnegative} if \(Y^{-1}\geq0\).  In \cite{DGJJT}, Dorsey et al.\ proved that, for
fixed square matrices \(X\) and \(Y\), the map
\(A\mapsto XAY\) is an into preserver of semipositivity precisely when
\(X\) is row positive and \(Y\) is inverse nonnegative, or when the
same conditions hold for \(-X\) and \(-Y\)
\cite[Theorem $2.4$]{DGJJT}.  This result led them to the following
conjecture.

\begin{conjecture}[Dorsey et al.\ \cite{DGJJT}]
\label{conj:dorsey}
Let \(L: M_{m,n}(\mathbb R)\to M_{m,n}(\mathbb R)\) be an invertible linear map.  If
$
       L(S_{m,n})\subseteq S_{m,n},
$
then \(L(A)=XAY\) for every \(A\in M_{m,n}(\mathbb R)\), where \(X\) is row
positive and \(Y\) is inverse nonnegative.
\end{conjecture}

Dorsey et al.\ also showed by an example that the invertibility of
\(L\) cannot be omitted and reported computational evidence for the
\(2\times2\) case.  Jayaraman and Mer \cite{JM} subsequently obtained two
results that are particularly relevant here.  First, they proved that
the conclusion of Conjecture \ref{conj:dorsey} follows if every
rank-one semipositive matrix is mapped to a rank-one matrix
\cite[Theorem $3.1$]{JM}.  They then established the conjecture for
\(m\geq n\geq2\) \cite[Theorem $3.11$]{JM}.  Although their sufficient
rank-one result does not require a relation between \(m\) and \(n\),
the argument used to obtain the required rank-one images was carried
out in the range \(m\geq n\).  A recent survey records precisely this
dimensional range \cite{CY}. Thus the rectangular case \(m < n\) was
not covered by that method.

The purpose of this manuscript is to settle this remaining case.  Our
argument is independent of \(m\) and \(n\), and
therefore gives the following dimension-free form of the conjecture.

\begin{theorem}[Main theorem]
\label{thm:main}
Let \(m,n\geq2\), and let \(L:M_{m,n}(\mathbb R)\to M_{m,n}(\mathbb R)\) be an invertible
linear map.  If
$
       L(S_{m,n})\subseteq S_{m,n},
$
then there exist invertible matrices \(X\in M_m (\mathbb R)\) and \(Y\in M_n (\mathbb R)\)
such that
$
       L(A)=XAY\ (A\in M_{m,n}(\mathbb R)),
$
where \(X\) is row positive and \(Y\) is inverse nonnegative.
Conversely, every map of this form is an into preserver of
\(S_{m,n}\).
\end{theorem}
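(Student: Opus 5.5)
The plan is to reduce Theorem~\ref{thm:main} to the rank-one criterion of Jayaraman and Mer \cite[Theorem $3.1$]{JM}. That result says the conclusion holds as soon as $L$ maps every rank-one semipositive matrix to a rank-one matrix. The converse direction is \cite[Theorem $2.4$]{DGJJT}: if $X$ is row positive and $Y^{-1}\geq 0$, and $x\geq0$ satisfies $Ax>0$, then $x':=Y^{-1}x\geq0$ and $XAYx'=XAx>0$, since each row of $X$ is nonnegative and nonzero. So all the work lies in showing that an invertible into preserver sends rank-one semipositive matrices to rank-one matrices. I will do this without comparing $m$ and $n$.

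The tool is the geometry of affine subspaces inside $S_{m,n}$, as the abstract indicates. Since $S_{m,n}$ is open and $L$ is an invertible linear map with $L(S_{m,n})\subseteq S_{m,n}$, $L$ sends any affine subspace contained in $S_{m,n}$ to an affine subspace of the same dimension, again contained in $S_{m,n}$. The first step is to find the maximal dimension $d$ of an affine subspace contained in $S_{m,n}$.

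\begin{itemize}
\item \emph{Example attaining a large dimension.} Fix $j$ and take $A_0=\mathbf{1}e_j^T$. The family $A_0+\{B: Be_j=0\}$ lies in $S_{m,n}$, because $x=e_j$ gives $Ax=\mathbf{1}>0$. Its dimension is $m(n-1)$.
\item \emph{Upper bound.} If $A_0+V\subseteq S_{m,n}$ with $V$ a subspace, I claim there is a single $x\geq0$ with $Vx=0$ and $A_0x>0$. The intended argument is a separation/compactness argument on the simplex $\Delta$ of nonnegative unit-sum vectors. Suppose that for every $x\in\Delta$ with $A_0x>0$ some $B\in V$ has $Bx\neq0$. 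Then a minimax (Ville/Gordan-type) alternative should yield $B\in V$ and a scalar $t$ with $A_0+tB$ not semipositive. Consequently $\dim V\le m(n-1)$.
\end{itemize}

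The next step classifies the maximal subspaces: every affine subspace of dimension $m(n-1)$ in $S_{m,n}$ is of the form $A_0+\{B:Bx=0\}$ for some nonzero $x\geq0$ with $A_0x>0$. I expect the bound and this classification to be the main obstacle. The delicate point is that $V$ need not annihilate a common vector a priori. I would first try to prove that the set $K_V=\{x\in\Delta: Vx=0\}$ is nonempty, by contradiction. Suppose $K_V$ is empty. By compactness, for each $x\in\Delta$ some $B\in V$ has a coordinate of $Bx$ that is negative and bounded away from $0$. A finite-cover plus LP-duality argument should then yield a single $B$, and a large $t$, with $(A_0+tB)x\not>0$ for all $x\in\Delta$. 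Once $K_V\neq\emptyset$, we get $V\subseteq\{B:Bx=0\}$ for $x\in K_V$, and the dimension count forces equality.

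Finally, I translate rank one into affine-subspace language. The maximal affine subspaces are indexed by the vector $x$, up to scaling. The linear parts $W_x=\{B:Bx=0\}$ of two such subspaces satisfy $\dim(W_x\cap W_y)=m(n-2)$ when $x,y$ are independent. Since $L$ is invertible, it permutes maximal affine subspaces and hence induces a map on their linear parts, $W_x\mapsto W_{\phi(x)}$, preserving these intersection dimensions.

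The key observation concerns rank-one semipositive matrices. A matrix $uv^T$ is semipositive exactly when (up to sign) $u>0$ and $v$ has a positive entry. For such a matrix, the direction $uv^T$ lies in $W_x$ for every $x$ in the hyperplane $v^\perp$ that meets the relevant cone. Thus a rank-one matrix is characterized as a matrix lying in a family of maximal linear parts $W_x$ whose $x$ sweep an $(n-1)$-dimensional set, while a matrix of rank at least $2$ lies in $W_x$ only for $x$ in a subspace of dimension at most $n-2$.

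The map $L$ preserves the incidence "$B\in W_x$" in the sense $L(W_x)=W_{\phi(x)}$. Openness of the relevant cone of $x$'s makes dimension counts meaningful. Together these show that $L(uv^T)$ lies in $W_y$ for an $(n-1)$-dimensional family of $y$, hence has rank one. Invoking \cite[Theorem $3.1$]{JM} then finishes the proof.
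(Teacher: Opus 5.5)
There is a genuine gap in the step you flag as the main obstacle, and it is more than a missing detail: the lemma you rely on is false. You claim that whenever $A_0+V\subseteq S_{m,n}$ there is a single $0\ne x\ge 0$ with $Vx=0$ and $A_0x>0$. Here is a counterexample. Take $m=n=2$, $A_0=\begin{pmatrix}0&1\\1&0\end{pmatrix}$, and $V=\R B$ with $B=\operatorname{diag}(1,-1)$.
\begin{itemize}
\item For $t>0$, the vector $x=(1,\epsilon)^T$ with $\epsilon<1/t$ gives $(A_0+tB)x=(t+\epsilon,\,1-t\epsilon)^T>0$.
\item For $t<0$, take $x=(\epsilon,1)^T$ with $\epsilon<1/|t|$.
\item For $t=0$, take $x=\one$.
\end{itemize}
So the whole line lies in $S_{2,2}$, yet $B$ is invertible and $K_V=\varnothing$. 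Hence no Ville/Gordan-type alternative or finite-cover/LP-duality argument can produce a $t$ with $A_0+tB\notin S_{m,n}$. Your sketch never uses $\dim V=mn-m$, and without that hypothesis the conclusion fails. So both your bound $\dim V\le m(n-1)$ and your classification are unproved.

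The missing idea is to test against nonnegative combinations of rows instead of looking for a common kernel vector. For each $0\ne y\ge 0$, the theorem of the alternative says that $\{(A_0+H)^Ty: H\in V\}$ misses $-\Rp{n}$. The affine Farkas lemma then gives $0\ne x_y\ge 0$ with $y^THx_y=0$ for all $H\in V$ (that is, $yx_y^T\in V^\perp$) and $y^TA_0x_y>0$. This is strictly weaker than $Vx_y=0$: in the example, $y=e_1$ yields $x_y=e_2$ and $y=e_2$ yields $x_y=e_1$. The choices $y=e_1,\dots,e_m$ give $m$ independent elements $e_ix_i^T$ of $V^\perp$, which proves the bound. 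Only under equality do these span $V^\perp$. Then $y=\one$ forces every $x_i$ to be a positive multiple of a single $x_*$, giving $V=\mathcal W_{x_*}$.

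Your rank-one step also has a hole, though a repairable one. The incidence argument shows $L(uv^T)$ has rank one only when $v^\perp\cap\Rp{n}$ spans $v^\perp$, for example when $v$ has entries of both signs. Consider a semipositive $uv^T$ with $v\ge 0$ having at least two positive entries, such as $\one\one^T$. Then $v^\perp\cap\Rp{n}$ is a face of dimension less than $n-1$ (just $\{0\}$ when $v>0$). So the Jayaraman--Mer rank-one hypothesis is not verified for these matrices.

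You also need the full count $\dim\bigcap_i\mathcal W_{x_i}=m\,(n-\dim\Span\{x_i\})$, not just the pairwise one, to conclude that $\phi$ preserves dimensions of spans. With that in hand, for mixed-sign $v$ you actually obtain $L(\mathcal U_v)=\mathcal U_{\psi(v)}$ for some nonzero $\psi(v)$. The splitting $v=(v+th)-th$, with a mixed-sign $h$ and large $t$, extends this to all $v$, exactly as in Claim 2 of the paper's factorization lemma.

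Once repaired, your route is a dual version of the paper's. The paper passes to $(L^{-1})^*$, which sends $\mathcal U_x$ onto $\mathcal U_{z_x}$ for $x>0$. You instead work with $L$ directly, using the incidence $\mathcal U_v\subseteq\mathcal W_x \iff v^Tx=0$.
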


The proof differs from the rank-one approach used previously.  For
\(0\ne x\geq0\) and \(b>0\), consider the affine fiber
\[
\mathcal P_{x,b}=\{A\in M_{m,n}(\mathbb R):Ax=b\}.
\] 
We prove that \(mn-m\) is the largest possible dimension of an affine
subspace contained in \(S_{m,n}\), and that the affine subspaces of
this dimension are exactly the fibers \(\mathcal P_{x,b}\).  An
invertible into preserver must therefore map such fibers to fibers of
the same kind.  After passing to directions and then to Frobenius
orthogonal complements, the adjoint map \((L^{-1})^*\) preserves the
subspaces
\[
       \mathcal U_x=\{ux^T:u\in\R^m\},
\]
which form one ruling of the rank-one tensor variety.  An elementary
factorization argument then gives \(L(A)=XAY\).  

We begin in Section $2$ with a theorem of the alternative and an affine
form of Farkas' lemma.  In Section $3$, we classify the
maximal-dimensional affine subspaces contained in the set of semipositive matrices.  
In Section $4$, we use this classification to obtain the product form of
an invertible into preserver.  Finally, in Section $5$, we present the proof of  the main theorem (Theorem~\ref{thm:main}).

\section{Preliminary results}

We present the preliminary results needed in the proof. 

\begin{lemma}[\cite{JKS}, Lemma  $2.1$]
\label{lem:witness}
For \(A\in\M{m}{n}\), the following statements are equivalent:
\begin{enumerate}
\item there exists \(x\in\Rpp{n}\) such that \(Ax\in\Rpp{m}\);
\item there exists \(0\ne x\in\Rp{n}\) such that \(Ax\in\Rpp{m}\).
\end{enumerate}
\end{lemma}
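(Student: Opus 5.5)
The implication (1)$\Rightarrow$(2) needs no work: a vector in $\Rpp{n}$ is nonzero and lies in $\Rp{n}$. All the effort goes into (2)$\Rightarrow$(1). The plan is a perturbation argument that uses the fact that $\Rpp{m}$ is open.

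Suppose $0\ne x\in\Rp{n}$ satisfies $Ax\in\Rpp{m}$. Put
\[
\delta=\min_{1\le i\le m}(Ax)_i>0
\]
and let $e=(1,\dots,1)^T\in\R^n$. For $\varepsilon>0$ define $x_\varepsilon=x+\varepsilon e$. Every coordinate of $x_\varepsilon$ is at least $\varepsilon>0$, so $x_\varepsilon\in\Rpp{n}$. By linearity,
\[
Ax_\varepsilon=Ax+\varepsilon Ae .
\]
Its $i$-th coordinate is therefore at least
\[
\delta-\varepsilon\,\|Ae\|_\infty .
\]

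If $Ae=0$, then $Ax_\varepsilon=Ax$ and any $\varepsilon>0$ works. Otherwise take
\[
0<\varepsilon<\frac{\delta}{\|Ae\|_\infty}.
\]
Then every coordinate of $Ax_\varepsilon$ is strictly positive, so $Ax_\varepsilon\in\Rpp{m}$, and $x_\varepsilon$ is the witness required in (1).

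This proof has no real obstacle. The only point to state carefully is that $\delta>0$: it is a minimum over finitely many strictly positive numbers. That is exactly the openness of $\Rpp{m}$ used quantitatively. The hypothesis $x\ne0$ is not needed for the perturbation itself. It is automatic anyway, since $Ax\in\Rpp{m}$ with $m\ge1$ forces $x\ne0$.
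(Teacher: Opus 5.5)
Your proof is correct and complete. Take $\delta=\min_i (Ax)_i>0$. The vector $x+\varepsilon\one$ lies in $\Rpp{n}$, and for small $\varepsilon>0$ it satisfies $\bigl(A(x+\varepsilon\one)\bigr)_i\ge\delta-\varepsilon\max_j|(A\one)_j|>0$. You also correctly observe that $x\neq 0$ is forced by $Ax\in\Rpp{m}$.

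The paper gives no proof of this lemma and simply quotes it from the cited reference. Your perturbation argument, which uses the openness of $\Rpp{m}$, is the standard self-contained justification, so nothing is missing.
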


We shall also have an occasion to use the following Theorem of the Alternative. The proof of this can be found in \cite{chs}.

\begin{theorem}[Theorem of the Alternative]
\label{thm:alternative}
For \(A\in\M{m}{n}\), exactly one of the following statements hold:
\begin{enumerate}
\item there exists \(0\ne x\in\Rp{n}\) such that \(Ax\in\Rpp{m}\);
\item there exists \(0\ne y\in\Rp{m}\) such that \(A^Ty\leq0\).
\end{enumerate}
Consequently,
\[
 A\notin S_{m,n}
 \quad\Longleftrightarrow\quad
 \text{there exists }0\ne y\in\Rp{m}\text{ with }A^Ty\leq0.
\]
\end{theorem}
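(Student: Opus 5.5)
We will prove the Theorem of the Alternative as a consequence of a standard separation argument (equivalently, Gordan's/Ville's theorem), and then read off the displayed equivalence using Lemma~\ref{lem:witness}.

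\emph{Both statements cannot hold at once.} Suppose \(0\ne x\in\Rp{n}\) with \(Ax\in\Rpp{m}\) and \(0\ne y\in\Rp{m}\) with \(A^Ty\leq0\). Then \(y^TAx>0\), because \(Ax\) has all coordinates positive and \(y\ge0\) is nonzero. On the other hand, \(y^TAx=(A^Ty)^Tx\le0\), because \(A^Ty\le0\) and \(x\ge0\). This is a contradiction.

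\emph{At least one statement holds.} Suppose (1) fails. Consider the convex compact set
\[
K=\{Ax: x\in\Rp{n},\ \one^Tx=1\},
\]
the image of the standard simplex. By Lemma~\ref{lem:witness} and scaling, the failure of (1) means \(K\cap\Rpp{m}=\emptyset\). Since \(K\) is convex and \(\Rpp{m}\) is a nonempty open convex set, the separating hyperplane theorem gives \(0\ne y\in\R^m\) and \(c\in\R\) with
\[
y^Tz\le c\le y^Tw \quad\text{for all } z\in K,\ w\in\Rpp{m}.
\]
Two consequences follow.
\begin{enumerate}
\item Letting \(w\to0\) inside \(\Rpp{m}\) gives \(c\le 0\).
\item If some \(y_i<0\), then sending \(w_i\to\infty\) makes \(y^Tw\to-\infty\), a contradiction. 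Hence \(y\ge0\), and \(y\ne0\).
\end{enumerate}
Finally, taking \(z=Ae_j\in K\) yields \((A^Ty)_j=y^TAe_j\le c\le0\) for every \(j\). So \(A^Ty\le0\), which is (2).

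\emph{The displayed equivalence.} By definition, \(A\in S_{m,n}\) iff some \(x\in\Rp{n}\) has \(Ax\in\Rpp{m}\). Such an \(x\) is automatically nonzero, so this is exactly statement (1). The dichotomy just proved then gives \(A\notin S_{m,n}\) iff (2) holds.

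The only step needing care is the separation: we must separate from the open orthant, not the closed one, so that the weak inequality yields \(c\le0\). The point of passing to the compact image \(K\) of the simplex is that it avoids closedness issues with the cone \(A\Rp{n}\). Beyond this, everything is routine.
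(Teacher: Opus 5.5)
Your proof is correct. The paper does not prove this theorem itself; it states it as a known result and refers to \cite{chs}, so there is no in-paper argument to match.

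Your route is a direct separation argument. You separate the image $K$ of the standard simplex from the open orthant $\Rpp{m}$, and read off $c\le 0$ and $y\ge 0$ by letting $w=\epsilon\one\to 0$ and $w=\one+te_i$ with $t\to\infty$. You then test against $z=Ae_j$ to get $A^Ty\le 0$.

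The alternative that fits the paper's own toolkit is LP duality, i.e.\ the Farkas lemma used for Lemma~\ref{lem:affine-farkas}. By scaling, (1) holds iff the system $Ax\ge\one$, $x\ge 0$ is feasible. Farkas' lemma says this system is infeasible iff there is $y\ge 0$ with $A^Ty\le 0$ and $\one^Ty>0$, i.e.\ $y\ne 0$. That derivation takes two lines once Farkas is granted. Yours needs only the basic separation theorem and is self-contained. Both handle exclusivity identically, via the sign of $y^TAx$.

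Some remarks on your side comments, none of which affect validity:
\begin{itemize}
\item Lemma~\ref{lem:witness} is not needed. Statement (1) already quantifies over nonzero $x\in\Rp{n}$, so rescaling to the simplex gives $K\cap\Rpp{m}=\emptyset$ directly.
\item Compactness of $K$ plays no role. The cone $A\Rp{n}$ is finitely generated, hence closed, and separation from an open convex set needs no closedness anyway.
\item The reason to use the open rather than the closed orthant is not to obtain $c\le 0$; the closed orthant would give that too, via $w=0$. The real reason is that $K$ can meet $\Rp{m}$ even when (1) fails (e.g.\ $A=0$). In that case no separation from the closed orthant exists.
\end{itemize}
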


The following lemma is an immediate consequence of the inequality form of
Farkas' lemma; see, for example, \cite[Lemma $10.5$]{Vanderbei}.

\begin{lemma}
\label{lem:affine-farkas}
Let \(a\in\mathbb{R}^d\) and let \(U\) be a linear subspace of
\(\mathbb{R}^d\). Then the following statements are equivalent:
\begin{enumerate}
\item
$(a+U)\cap(-\mathbb{R}_+^d)=\varnothing;$
\item there exists $x\in\mathbb{R}_+^d\cap U^\perp$ such that \(a^Tx>0\).
\end{enumerate}
\end{lemma}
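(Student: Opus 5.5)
The plan is to derive Lemma~\ref{lem:affine-farkas} from the inequality form of Farkas' lemma, which I will use as the theorem of the alternative for systems \(Bz\le c\). It states that \(Bz\le c\) has no solution if and only if there is \(y\ge 0\) with \(B^Ty=0\) and \(c^Ty<0\). First I choose a basis \(u_1,\dots,u_k\) of \(U\) and let \(B\) be the \(d\times k\) matrix with these columns, so that \(a+U=\{a+Bz:z\in\R^k\}\). Then \((a+U)\cap(-\R_+^d)\neq\varnothing\) holds exactly when the system \(Bz\le -a\) is solvable in \(z\).

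For (2)\(\Rightarrow\)(1) I argue directly, without Farkas. Suppose \(x\ge0\), \(x\perp U\) and \(a^Tx>0\). If some \(w=a+u\) with \(u\in U\) satisfied \(w\le0\), then \(x^Tw=a^Tx>0\), since \(x^Tu=0\). But \(x\ge0\) and \(w\le0\) give \(x^Tw\le0\), a contradiction. So the intersection is empty.

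For (1)\(\Rightarrow\)(2) I use Farkas. If \(Bz\le -a\) has no solution, there is \(y\ge0\) with \(B^Ty=0\) and \((-a)^Ty<0\). The condition \(B^Ty=0\) means \(y\perp u_i\) for every \(i\), that is, \(y\in U^\perp\). The second condition is \(a^Ty>0\). Taking \(x=y\) gives (2).

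The only step that needs care is the bookkeeping of signs when matching Farkas' statement to the translated system. I also check the degenerate case \(U=\{0\}\): here \(B\) is empty and \(U^\perp=\R^d\), and the statement reduces to saying that \(a\not\le0\) if and only if some \(x\ge0\) has \(a^Tx>0\). This is immediate by taking \(x\) to be a standard basis vector \(e_i\) with \(a_i>0\), so this case can also be treated directly.
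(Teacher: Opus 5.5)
Your proof is correct and follows essentially the same route as the paper: write \(U\) as the range of a matrix \(B\), reduce (1) to infeasibility of \(Bz\le -a\), and apply Farkas' lemma, reading \(B^Ty=0\) as \(y\in U^\perp\). The differences are cosmetic. The paper passes to the equality form of Farkas via \(z=z^+-z^-\) and a slack variable, while you use the inequality form directly and prove (2)\(\Rightarrow\)(1) by a one-line argument.
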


\begin{proof}
If \(r=\dim U\), choose \(B\in\mathbb{R}^{d\times r}\) such that
\(\operatorname{range}(B)=U\). Then
\[
(a+U)\cap(-\mathbb{R}_+^d)\neq \varnothing
\]
if and only if there exists \(z\in\mathbb{R}^r\) satisfying
\[
Bz\leq -a.
\]
Writing \(z=z^+-z^-\), with \(z^+,z^- \geq0\), and introducing a slack
variable \(s\geq0\), the latter system is equivalent to
\[
\begin{bmatrix}B&-B&I_d\end{bmatrix}
\begin{bmatrix}z^+\\ z^-\\ s\end{bmatrix}
=-a,
\qquad
\begin{bmatrix}z^+\\ z^-\\ s\end{bmatrix}\geq0.
\]
By Farkas' lemma, this system is infeasible if and only if there exists
\(x\in\mathbb{R}^d\) such that
\[
B^Tx\geq0,\qquad -B^Tx\geq0,\qquad x\geq0,
\qquad (-a)^Tx < 0.
\]
The first two inequalities imply \(B^Tx = 0\), while the last inequality is
equivalent to \(a^Tx > 0\). Since
\[
\ker(B^T)=\operatorname{range}(B)^\perp=U^\perp,
\]
the conclusion follows.
\end{proof}

\section{Maximal affine subspaces in the semipositive set}

We now turn our attention to affine subspaces contained in
\(S_{m,n}\).  The following theorem is the main geometric ingredient
in the proof of Theorem \ref{thm:main}.

For \(0\ne x\in\R^n\), define
\[
\mathcal W_x=\{H\in\M{m}{n}:Hx=0\},
\qquad
\mathcal U_x=\{u x^T:u\in\R^m\}.
\] 
The evaluation map \(H \mapsto Hx\) being surjective, we have 
\[
\dim\mathcal W_x = mn-m.
\]
Moreover, 
\begin{equation}
\label{eq:orthogonal-rulings}
 \mathcal W_x^\perp = \mathcal U_x,
\end{equation} 
as 
\(\ip {H}{ux^T} = u^THx\).
 The direction space of a nonempty affine subspace
\(\mathcal A\) is
\[
    \operatorname{dir}(\mathcal A)
    :=\{A_1-A_2:A_1,A_2\in\mathcal A\}.
\]
\begin{theorem}
\label{thm:affine-flats}
Let  \(A_0\in S_{m,n}\) and \(\mathcal P=A_0+\mathcal W\) be an affine subspace of
\(\M{m}{n} (\mathbb{R} )\) such that \(\mathcal P\subseteq S_{m,n}\).  Then
\[
\dim\mathcal P\leq mn-m.
\] 
Equality holds if and only if there exist
\(0\ne x\in\Rp{n}\) and \(b\in\Rpp{m}\) such that
\begin{equation}
\label{eq:fiber}
\mathcal P=\mathcal P_{x,b}
:=\{A\in\M{m}{n}:Ax=b\}.
\end{equation}
\end{theorem}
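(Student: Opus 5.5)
The plan is to test the flat $\mathcal P=A_0+\mathcal W$ against well-chosen vectors $y\ge 0$ from the Theorem of the Alternative (Theorem~\ref{thm:alternative}), and convert each test into a linear constraint on $\mathcal W$ via Lemma~\ref{lem:affine-farkas}. Since $\mathcal P\subseteq S_{m,n}$, Theorem~\ref{thm:alternative} says that for every $0\ne y\in\Rp{m}$ and every $A\in\mathcal P$ we have $A^Ty\not\le 0$. Equivalently, the affine subspace
\[
A_0^Ty+\mathcal W^Ty,\qquad \mathcal W^Ty:=\{H^Ty:H\in\mathcal W\},
\]
of $\R^n$ does not meet $-\Rp{n}$. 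By Lemma~\ref{lem:affine-farkas} (with $d=n$, $a=A_0^Ty$, $U=\mathcal W^Ty$) we obtain some $x_y\in\Rp{n}$ with $x_y\perp \mathcal W^Ty$ and $y^TA_0x_y>0$. In particular $x_y\ne0$, so $\mathcal W^Ty$ is a proper subspace of $\R^n$.

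\emph{Upper bound.} First I would take $y=e_i$ for $i=1,\dots,m$. Then $\mathcal W^Te_i$ is the set of $i$-th rows of elements of $\mathcal W$. The previous step gives $0\ne x_i\in\Rp{n}$ such that $h_i^Tx_i=0$ for the $i$-th row $h_i$ of every $H\in\mathcal W$, and such that $(A_0x_i)_i>0$. Hence
\[
\mathcal W\subseteq\{H:\ \text{row}_i(H)\in x_i^\perp \text{ for all } i\},
\]
and the right-hand side has dimension $m(n-1)$. This gives $\dim\mathcal P\le mn-m$.

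\emph{Equality case.} If $\dim\mathcal W=mn-m$, the inclusion above is an equality. So $\mathcal W^Te_i=x_i^\perp$, and $\mathcal W$ contains every matrix whose rows lie in the respective hyperplanes $x_i^\perp$, chosen independently. The key step is to force all the $x_i$ to be parallel. For this I would use $y=e_i+e_j$ with $i\ne j$. Because the rows of elements of $\mathcal W$ can be chosen independently, $\mathcal W^T(e_i+e_j)=x_i^\perp+x_j^\perp$. This sum equals $\R^n$ unless $x_i^\perp=x_j^\perp$, and $\mathcal W^T(e_i+e_j)=\R^n$ would contradict properness from the first paragraph. So all $x_i$ are positive multiples of a single $0\ne x\in\Rp{n}$ (positive multiples because all $x_i\ge0$ and are nonzero). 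Then $\mathcal W=\mathcal W_x$, and $\mathcal P=A_0+\mathcal W_x=\{A:Ax=b\}$ with $b=A_0x$. Each coordinate $b_i$ is a positive multiple of $(A_0x_i)_i>0$, so $b\in\Rpp{m}$.

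\emph{Converse.} $\mathcal P_{x,b}$ is nonempty (the evaluation map $H\mapsto Hx$ is surjective), its direction space is $\mathcal W_x$, and so it has dimension $mn-m$. Every $A$ in it satisfies $Ax=b>0$ with $0\ne x\ge0$, so $A\in S_{m,n}$ by definition together with Lemma~\ref{lem:witness}.

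The only delicate point I anticipate is the equality step. It relies on the observation that equality makes $\mathcal W$ a full product of row hyperplanes; that is exactly what makes $\mathcal W^T(e_i+e_j)$ computable as a sum of hyperplanes, and it should be stated carefully.
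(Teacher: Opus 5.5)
Your proof is correct and follows the paper's argument: the same Theorem of the Alternative plus affine Farkas step with $y=e_i$ gives the bound $\dim\mathcal P\le mn-m$, and the equality case is forced by one additional mixed test vector. The only difference is cosmetic. The paper tests with $y=\one$ and compares rows of $\one x_*^T\in\mathcal W^\perp=\Span\{e_ix_i^T\}$, whereas you test with $y=e_i+e_j$ and show on the primal side that $x_i^\perp+x_j^\perp$ must be a proper subspace; this is the dual form of the same observation.
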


\begin{proof}
Fix \(0\ne y\in\Rp{m}\), and set
\[
a_y=A_0^Ty,
\qquad
U_y=\{H^Ty:H\in\mathcal W\}\subseteq\R^n.
\]
We claim that $(a_y+U_y)\cap(-\Rp{n})=\varnothing.$
Indeed, if \(A_0^Ty+H^Ty\leq0\) for some \(H\in\mathcal W\), then
\((A_0+H)^Ty\leq0\).  Theorem \ref{thm:alternative} would then imply
\(A_0+H\notin S_{m,n}\), contradicting 
\(\mathcal P \subseteq S_{m,n}\).

By Lemma \ref{lem:affine-farkas}, there exists a vector
\(0\ne x_y\in\Rp{n}\) such that
\begin{equation}
\label{eq:xy-properties}
        x_y\in U_y^\perp,
       \
       y^TA_0x_y=a_y^Tx_y>0.
\end{equation}

For every \(H\in\mathcal W\), the first relation gives
$ \ip{H}{yx_y^T}=y^THx_y=0.$ Therefore 

\begin{equation}
\label{eq:rank-one-normal}
yx_y^T\in\mathcal W^\perp.
\end{equation}

Applying the above with \(y=e_i\), \(1\leq i\leq m\), we obtain
nonzero vectors \(x_i\in\Rp{n}\) for which
\[
e_ix_i^T\in\mathcal W^\perp.
\]
These \(m\) matrices are linearly independent, because each is
supported in a different row and is nonzero.  Therefore, 
$ \dim\mathcal W^\perp\geq m,$
and consequently $ \dim\mathcal P=\dim\mathcal W\leq mn-m.$

Assume now that equality holds.  Then
\(\dim\mathcal W^\perp=m\), thereby making
$ \{e_1x_1^T,\dots,e_mx_m^T\}$ a basis of \(\mathcal W^\perp\).  Apply
\eqref{eq:rank-one-normal} with \(y=\one = (1,1, ...,1)^T \in\Rpp{m}\).  Then, there is a
nonzero \(x_*\in\Rp{n}\) such that
\(\one x_*^T\in\mathcal W^\perp\).  Hence, for suitable scalars
\(c_1,\dots,c_m\),
\[
       \one x_*^T=\sum_{i=1}^m c_i e_ix_i^T.
\]
Comparing the \(i\)-th rows gives
$
       x_*^T=c_i x_i^T, \ (1\leq i\leq m).
$
Both \(x_*\) and \(x_i\) being nonzero and nonnegative, each
\(c_i\) is strictly positive.  Thus every \(x_i\) is a positive scalar
multiple of \(x_*\), and
\[
 \mathcal W^\perp
   =\Span\{e_1x_*^T,\dots,e_mx_*^T\}
   =\mathcal U_{x_*}.
\]
By \eqref{eq:orthogonal-rulings},
\(\mathcal W=\mathcal W_{x_*}\).  Furthermore,
\eqref{eq:xy-properties} with \(y=e_i\) says
\(e_i^TA_0x_i>0\).  Since \(x_i\) is a positive scalar multiple of
\(x_*\), we obtain
\[
       b:=A_0x_*\in\Rpp{m}.
\]
It then follows that
\[
 A_0+\mathcal W
 =A_0+\mathcal W_{x_*}
 =\{A:Ax_*=A_0x_*\}
 =\mathcal P_{x_*,b},
\]
which proves the necessity in the equality case.

Conversely, let \(0\ne x\in\Rp{n}\) and \(b>0\).  Every
\(A\in\mathcal P_{x,b}\) satisfies \(Ax=b>0\). 
Lemma \ref{lem:witness} then gives \(A\in S_{m,n}\).  The direction of this
fiber is \(\mathcal W_x\), whose dimension is \(mn-m\).  This proves
both the converse and the final assertion.
\end{proof}

\section{The structure of an invertible into preserver}

We now use the classification of affine-subspaces to determine the
algebraic form of an invertible into preserver.  We begin with the following factorization lemma.

\begin{lemma}
\label{lem:ruling-factorization}
Let \(T:M_{m,n}(\mathbb R)\to M_{m,n}(\mathbb R)\) be an invertible linear map. Suppose
that for every \(x\in\mathbb R_{++}^n\), there exists a nonzero
\(\phi(x)\in\mathbb R^n\) such that
\begin{equation}
\label{eq:ruling-hypothesis}
T(\mathcal U_x)=\mathcal U_{\phi(x)},
\end{equation}
where
$
\mathcal U_x:=\{ux^T:u\in\mathbb R^m\}.
$
Then there exist invertible matrices  \(P\in M_m(\mathbb R)\) and
\(Q\in M_n(\mathbb R)\) such that
$
T(A)=PAQ.
$
\end{lemma}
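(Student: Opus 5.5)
We have $T(\mathcal U_x)=\mathcal U_{\phi(x)}$ for every $x\in\Rpp{n}$, and the plan is to show that $T$ is of the form $A\mapsto PAQ$.

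\emph{Step 1: the row map is fixed.} For each $x\in\Rpp{n}$, the restriction $u\mapsto T(ux^T)$ is an injective linear map $\R^m\to\mathcal U_{\phi(x)}$. Since both spaces have dimension $m$, there is a unique invertible $P_x\in M_m(\R)$ with
\[
T(ux^T)=(P_xu)\,\phi(x)^T \qquad (u\in\R^m).
\]
Here $\phi(x)$ and $P_x$ are determined only up to reciprocal scalars. We now show that $P_x$ does not depend on $x$, up to scalar.

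Take $x,x'\in\Rpp{n}$ that are linearly independent, so that $x+x'\in\Rpp{n}$. Fix $u\in\R^m$ and apply linearity:
\[
(P_xu)\phi(x)^T+(P_{x'}u)\phi(x')^T=(P_{x+x'}u)\phi(x+x')^T .
\]
The left side therefore has rank at most one for every $u$.

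First, $\phi(x)$ and $\phi(x')$ are not parallel. If they were, then $\mathcal U_{\phi(x)}=\mathcal U_{\phi(x')}$, which gives $T(\mathcal U_x)=T(\mathcal U_{x'})$. By injectivity of $T$ this forces $\mathcal U_x=\mathcal U_{x'}$, so $x\parallel x'$, a contradiction.

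Since $\phi(x),\phi(x')$ are independent, a sum $a\phi(x)^T+b\phi(x')^T$ has rank $\le 1$ only when $a\parallel b$. Hence $P_xu\parallel P_{x'}u$ for every $u$. Equivalently, $P_{x'}^{-1}P_x$ maps every vector to a multiple of itself, so it is a scalar, and $P_x=\lambda P_{x'}$.

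For parallel $x,x'$ the spaces $\mathcal U_x,\mathcal U_{x'}$ coincide and the claim is immediate. Absorbing scalars into $\phi$, we fix a single invertible $P$ with
\[
T(ux^T)=(Pu)\,\phi(x)^T \qquad \text{for all } x\in\Rpp{n},\ u\in\R^m .
\]

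\emph{Step 2: $\phi$ becomes linear.} Once $P$ is fixed, $\phi(x)$ is uniquely determined. Linearity of $T$ in $x$ then gives
\[
(Pu)\bigl(\phi(x+x')-\phi(x)-\phi(x')\bigr)^T=0 \quad\text{and}\quad \phi(cx)=c\,\phi(x) \quad (c>0).
\]
So $\phi$ is additive and positively homogeneous on the open cone $\Rpp{n}$. Because $\Rpp{n}$ spans $\R^n$ and is closed under these operations, $\phi$ extends uniquely to a linear map $\R^n\to\R^n$. Write this extension as $x\mapsto Q^Tx$.

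We claim the identity $T(ux^T)=Pu\,x^TQ$ holds for all $x\in\R^n$. Write $x=x_1-x_2$ with $x_1,x_2\in\Rpp{n}$ and use linearity of $T$.

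\emph{Step 3: conclusion.} The rank-one matrices $ux^T$ span $M_{m,n}(\R)$, so $T(A)=PAQ$ for every $A$.

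It remains to see that $Q$ is invertible. If $Q$ were singular, then $T$ would not be invertible, since $A\mapsto PAQ$ would have a nontrivial kernel.

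\emph{Main obstacle.} The delicate step is Step 1, the scalar-independence argument. It requires the non-parallelism of $\phi(x),\phi(x')$, which we obtain from injectivity of $T$. It also requires handling the scalar ambiguity in the pair $(P_x,\phi(x))$ consistently, which is why $P$ is normalised once and $\phi$ rescaled accordingly before Step 2.
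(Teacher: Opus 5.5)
Your proof is correct. Its central step matches the paper's. Because $x+x'$ (the paper's $z_1+z_2$) lies in $\mathbb R_{++}^n$, the matrix $(P_xu)\phi(x)^T+(P_{x'}u)\phi(x')^T$ has rank at most one. Since $\phi(x),\phi(x')$ are independent by injectivity of $T$, this forces $P_{x'}^{-1}P_x$ to be a scalar.

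The two routes diverge after that step. The paper uses it only to extend the ruling property to every nonzero $z=z_1-z_2$. It then deduces that $T$ sends rank-one matrices to rank-one matrices and appeals to Lautemann's nonsingular rank-one preserver theorem. Finally it has to exclude the alternative $T(A)=PA^TQ$ when $m=n$. You instead normalize once to a single global $P$, after which $\phi$ is uniquely determined, additive and positively homogeneous on $\mathbb R_{++}^n$. It is therefore the restriction of a linear map $x\mapsto Q^Tx$; setting the value at $x_1-x_2$ to be $\phi(x_1)-\phi(x_2)$ is well defined by additivity, and that one-line check is worth writing out. You then read off $T(A)=PAQ$ because the matrices $ux^T$ span $M_{m,n}(\mathbb R)$.

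Your argument is self-contained and elementary. It uses the full strength of the hypothesis, namely preservation of the specific rulings $\mathcal U_x$ rather than mere rank-one preservation, so the transpose alternative never arises. The paper's version is shorter on the page because it outsources the passage from rank-one preservation to the product form. The price is reliance on an external theorem plus an extra exclusion argument.
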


\begin{proof}
The proof is divided into two claims. We state and prove them in order and deduce the required 
result  from them.

\noindent
\underline{Claim $1$:}
For every $x\in\mathbb R_{++}^n\cup(-\mathbb R_{++}^n),$ there exists \(0\neq p_x\in\mathbb R^n\) 
such that $T(\mathcal U_x)=\mathcal U_{p_x}.$

\smallskip
\noindent
\underline{Proof of Claim $1$:}
If \(x\in\mathbb R_{++}^n\), then the claim is trivially true. If \(x\in-\mathbb R_{++}^n\), then 
\(-x\in\mathbb R_{++}^n\) and $\mathcal U_x = \mathcal U_{-x}.$ We then obtain
$T(\mathcal U_x) = T(\mathcal U_{-x}) = \mathcal U_{\phi(-x)}$, which proves the claim.

\medskip
\noindent
\underline{Claim $2$:}
For every \(0\neq z\in\mathbb R^n\), there exists
\(0\neq p_z\in\mathbb R^n\) such that $T(\mathcal U_z)=\mathcal U_{p_z}.$

\smallskip
\noindent
\underline{Proof of Claim $2$:}
If $ z \in \mathbb R_{++}^n\cup(-\mathbb R_{++}^n),$ the result follows from Claim $1$. We therefore assume 
that $z \notin \mathbb R_{++}^n \cup (-\mathbb R_{++}^n)$. Let us choose \(h\in\mathbb R_{++}^n\) such that \(h\) 
is not a scalar multiple of \(z\). For sufficiently large \(t>0\), the vectors
$z_1:=z+th \ \text{and}\ z_2:= th$ belong to \(\mathbb R_{++}^n\). Moreover,
$z = z_1-z_2$ and are also linearly independent. Indeed, if
they were linearly dependent, then $z= z_1-z_2$ would be a scalar multiple of \(h\), contrary to the choice of \(h\). 
By the hypothesis, there exist nonzero vectors \(p_1, p_2\in\mathbb R^n\) such that
\[
T(\mathcal U_{z_1})=\mathcal U_{p_1}, \qquad T(\mathcal U_{z_2})=\mathcal U_{p_2}.
\] 
The vectors \(p_1\) and \(p_2\) are again linearly independent. Otherwise,
\(\mathcal U_{p_1} = \mathcal U_{p_2}\), and by the invertibility of \(T\),
$\mathcal U_{z_1}=\mathcal U_{z_2}.$  This would then imply that \(z_1\) and \(z_2\) are scalar multiples of one another, 
a contradiction to their linear independence.

For \(k=1,2\), the restriction of \(T\) to \(\mathcal U_{z_k}\) is an
isomorphism from \(\mathcal U_{z_k}\) onto \(\mathcal U_{p_k}\).
Consequently, there exists a uniquely determined invertible
\(P_k\in M_m(\mathbb R)\) such that
\begin{equation}
\label{eq:Pk-ruling}
 T(uz_k^T)=P_ku\,p_k^T \qquad(u\in\mathbb R^m).
\end{equation}

Since \(z_1+z_2 \in\mathbb R_{++}^n\), Claim $1$ implies that $T\bigl(u(z_1+z_2)^T\bigr)$ has rank at most one 
for every \(u\in\mathbb R^m\). On the other hand, by linearity and \eqref{eq:Pk-ruling},
\[
\begin{aligned}
    T\bigl(u(z_1+z_2)^T\bigr)
    &=T(uz_1^T)+T(uz_2^T)\\
    &=P_1u\,p_1^T+P_2u\,p_2^T.
\end{aligned}
\]
Since \(p_1\) and \(p_2\) are linearly independent, it follows that
\(P_1u\) and \(P_2u\) are linearly dependent for every
\(u\in\mathbb R^m\).

Set $S:= P_1^{-1}P_2.$ Then $Su \in \operatorname{span}\{u\}\ (u\in\mathbb R^m).$
It then follows that $S=\lambda I_m$  for some nonzero scalar \(\lambda\). Thus, $P_2 = \lambda P_1.$ 
Using \(z=z_1-z_2\), we now obtain
\[
\begin{aligned}
    T(uz^T)
    &=T(uz_1^T)-T(uz_2^T)\\
    &=P_1u\,p_1^T-\lambda P_1u\,p_2^T\\
    &=P_1u\,(p_1-\lambda p_2)^T.
\end{aligned}
\] 
Since \(p_1\) and \(p_2\) are linearly independent, $p_1-\lambda p_2 \neq 0.$ From invertibility of \(P_1\), we get 
$T(\mathcal U_z) = \mathcal U_{p_1-\lambda p_2}$, which proves the claim.

\medskip
\noindent
We have thus proved that $L$ preserves the set of rank-one matrices. By the nonsingular rank-one 
preserver theorem \cite[Theorem $2$]{Lautemann}, there  exist invertible matrices  
\(P\in M_m(\mathbb R)\) and \(Q\in M_n(\mathbb R)\) such that either
\begin{equation}
\label{eq:lautemann-standard}
 T(A)=PAQ,
\end{equation}
or \(m=n\) and
\begin{equation}
\label{eq:lautemann-transpose}
T(A)=PA^TQ.
\end{equation}

We now show that the second alternative stated above is not possible. Suppose that
\(m=n\) and that \eqref{eq:lautemann-transpose} holds. Fix
\(x\in\mathbb R_{++}^n\). Then
\[
\begin{aligned}
    T(\mathcal U_x)
    &=\{P(ux^T)^TQ:u\in\mathbb R^n\}\\
    &=\{(Px)(Q^Tu)^T:u\in\mathbb R^n\}\\
    &=\{(Px)v^T:v\in\mathbb R^n\},
\end{aligned}
\]
where the last equality follows from the invertibility of \(Q^T\).
On the other hand, the hypothesis gives
$T(\mathcal U_x)=\mathcal U_{\phi(x)} =\{w\phi(x)^T:w\in\mathbb R^n\}.$ 
Choose $v\notin \operatorname{span}\{\phi(x)\}$, 
which is possible because \(n\geq2\). Then $(Px)v^T\in T(\mathcal U_x).$ 
Thus, there exists \(w\in\mathbb R^n\) such that $(Px)v^T=w\phi(x)^T.$
Both sides being nonzero rank-one matrices, their row spaces
are equal. This then gives  $v \in \operatorname{span}\{\phi(x)\},$
contradicting the choice of \(v\). This proves that the second arternative the transpose cannot occur. 
Therefore, \eqref{eq:lautemann-standard} holds. We finally obtain
\[
T(A)=PAQ \qquad(A\in M_{m,n}(\mathbb R)),
\] as desired.
\end{proof}

\begin{proposition}[Algebraic form of an invertible into preserver]
\label{prop:algebraic-form}
Let \(L:M_{m,n}(\mathbb R)\to M_{m,n}(\mathbb R)\) be invertible and satisfy
\(L(S_{m,n})\subseteq S_{m,n}\).  Then there exist invertible matrices 
\(X\in M_m(\R)\) and \(Y\in M_n(\R)\) such that
\[
L(A)=XAY.
\]
\end{proposition}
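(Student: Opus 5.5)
We will combine Theorem~\ref{thm:affine-flats} with Lemma~\ref{lem:ruling-factorization}, applied to $T=(L^{-1})^*$, the Frobenius adjoint of $L^{-1}$.

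The first step is to show that $L$ carries each maximal fiber onto a maximal fiber. Fix $x\in\Rpp{n}$ and take $b=\one$. Since $L$ is linear and invertible, $L(\mathcal P_{x,\one})$ is an affine subspace of dimension $mn-m$. Because $\mathcal P_{x,\one}\subseteq S_{m,n}$ and $L(S_{m,n})\subseteq S_{m,n}$, this image lies in $S_{m,n}$. The equality case of Theorem~\ref{thm:affine-flats} then gives $0\ne x'\in\Rp{n}$ and $b'\in\Rpp{m}$ with $L(\mathcal P_{x,\one})=\mathcal P_{x',b'}$. Passing to direction spaces, which a linear map respects, we get
\[
L(\mathcal W_x)=\mathcal W_{x'}.
\]

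The second step passes to orthogonal complements. For any invertible linear $L$ and subspace $\mathcal W$ we have $\bigl(L(\mathcal W)\bigr)^\perp=(L^{-1})^*(\mathcal W^\perp)$, since
\[
\ip{L(H)}{K}=0 \iff \ip{H}{L^*K}=0 .
\]
Put $T:=(L^{-1})^*$. Using \eqref{eq:orthogonal-rulings}, the identity from the first step becomes
\[
T(\mathcal U_x)=\mathcal U_{\phi(x)},\qquad \phi(x):=x',
\]
for every $x\in\Rpp{n}$. Here $\phi(x)$ is nonzero, and any choice of $x'$ works because $\mathcal W_{x'}$ determines the line spanned by $x'$. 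So $T$ satisfies the hypothesis \eqref{eq:ruling-hypothesis} of Lemma~\ref{lem:ruling-factorization}, which yields invertible $P,Q$ with $T(A)=PAQ$.

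The third step undoes the adjoint and the inverse. The adjoint of $A\mapsto PAQ$ is $A\mapsto P^TAQ^T$, because
\[
\ip{PAQ}{B}=\operatorname{tr}(Q^TA^TP^TB)=\ip{A}{P^TBQ^T}.
\]
Hence $L^{-1}=T^*$ is $A\mapsto P^TAQ^T$, and so
\[
L(A)=(P^T)^{-1}A(Q^T)^{-1}.
\]
Setting $X=(P^{-1})^T$ and $Y=(Q^{-1})^T$ gives the required form.

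The main obstacle is the first step: we must make sure that the image of the fiber is a fiber of exactly maximal dimension, not merely one contained in some fiber, so that the equality clause of Theorem~\ref{thm:affine-flats} applies. This follows from injectivity of $L$, which preserves dimension. Everything after that is bookkeeping with adjoints, where the only care needed is the correct complement identity $\bigl(L(\mathcal W)\bigr)^\perp=(L^{-1})^*(\mathcal W^\perp)$.
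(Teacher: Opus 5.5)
Your proposal is correct and follows the paper's proof essentially step for step. You push $\mathcal P_{x,\one}$ forward using the equality case of Theorem~\ref{thm:affine-flats}, then pass to orthogonal complements via $(L\mathcal W)^\perp=(L^{-1})^*(\mathcal W^\perp)$ to obtain $T(\mathcal U_x)=\mathcal U_{x'}$ for $T=(L^{-1})^*$, and finally apply Lemma~\ref{lem:ruling-factorization} and undo the adjoint to get $X=(P^{-1})^T$, $Y=(Q^{-1})^T$; your extra checks (well-definedness of $\phi(x)$ up to scaling, and the adjoint computation for $A\mapsto PAQ$) are accurate and only make explicit what the paper states briefly.
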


\begin{proof}
Fix \(x\in\Rpp{n}\) and \(b=\one\in\Rpp{m}\).  By
Theorem \ref{thm:affine-flats},
\[
\mathcal P_{x,b}=\{A:Ax=b\}
\] 
is an affine subspace of dimension \(mn-m\) contained in \(S_{m,n}\), and
its direction is \(\mathcal W_x\).  Since \(L\) is invertible,
\(L(\mathcal P_{x,b})\) is again an affine subspace of dimension
\(mn-m\) contained in \(S_{m,n}\). Appealing to 
Theorem \ref{thm:affine-flats} again, we get a nonzero
\(z_x\in\Rp{n}\) and \(c_x\in\Rpp{m}\) such that
\[
L(\mathcal P_{x,b})=\mathcal P_{z_x,c_x}.
\] 
Taking directions yields
\begin{equation}
\label{eq:W-preserved}
L(\mathcal W_x)=\mathcal W_{z_x} \qquad(x\in\Rpp{n}).
\end{equation}

Set
\[
T=(L^{-1})^*,
\] where $L^*$ is the adjoint of $L$ with respect to the Frobenius inner product. 
For any subspace \(\mathcal V \subseteq\M{m}{n}\), invertibility of $L$ gives
\begin{equation}
\label{eq:orthogonal-image}
 (L\mathcal V)^\perp = (L^{-1})^*(\mathcal V^\perp).
\end{equation}

In fact, \(B\in(L\mathcal V)^\perp\) if and only if
\(L^*B\in\mathcal V^\perp\), which is equivalent to
\(B\in(L^*)^{-1}(\mathcal V^\perp)=(L^{-1})^*(\mathcal V^\perp)\). 
Using \eqref{eq:orthogonal-rulings}, \eqref{eq:W-preserved}, and
\eqref{eq:orthogonal-image}, we obtain
\[
T(\mathcal U_x)=\mathcal U_{z_x} \qquad(x\in\Rpp{n}).
\]
From Lemma \ref{lem:ruling-factorization}, we get 
invertible matrices  \(P\in M_m(\mathbb R)\) and
\(Q\in M_n(\mathbb R)\)  such that
\[
T(A)=PAQ.
\] 
so that \(T^*(A)=P^TAQ^T\).  Since
\(T^*=L^{-1}\), we conclude that
\[
L(A)=P^{-T}AQ^{-T}.
\] 
Thus the desired representation holds with
\(X=P^{-T}\) and \(Y=Q^{-T}\).
\end{proof}

\section{The main theorem}

We are now in a position to prove the main result stated in the introduction.

\begin{proof}[Proof of Theorem~\ref{thm:main}]
Suppose \(L\) is invertible and \(L(S_{m,n})\subseteq S_{m,n}\).  From Proposition
\ref{prop:algebraic-form} we get invertible matrices \(X\) and \(Y\) such that \(L(A)=XAY\).  Theorem $2.4$ of 
\cite{DGJJT} then shows that \(X\) is row positive and \(Y\) is inverse nonnegative. The converse follows from 
Theorem $2.4$ of \cite{DGJJT}.
\end{proof}

\begin{corollary}[The open rectangular case]
Let \(2\leq m<n\), and let \(L:M_{m,n}(\mathbb R)\to M_{m,n}(\mathbb R)\) be invertible.  If
\(L(S(\Rp{n},\Rp{m}))\subseteq S(\Rp{n},\Rp{m})\), then
\[
L(A)=XAY\qquad(A\in M_{m,n}(\mathbb R)),
\] 
where \(X\in M_m(\R)\) is invertible row positive and\(Y\in M_n(\R)\) is inverse nonnegative.
\end{corollary}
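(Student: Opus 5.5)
This corollary is the special case \(2\le m<n\) of Theorem~\ref{thm:main}, so the plan is simply to specialize. First we check that the hypotheses match: \(S(\Rp{n},\Rp{m})\) is by definition the set \(S_{m,n}\), \(L\) is invertible, and \(m,n\geq 2\) follows from \(2\le m<n\). Theorem~\ref{thm:main} then gives invertible \(X\in M_m(\R)\) and \(Y\in M_n(\R)\) with \(L(A)=XAY\), where \(X\) is row positive and \(Y\) is inverse nonnegative. Invertibility of \(X\) is part of that conclusion, since it comes from Proposition~\ref{prop:algebraic-form}.

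It helps to recall where the restriction \(m\ge n\) of \cite{JM} used to enter, and why nothing in the present chain depends on the order of \(m\) and \(n\).

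\begin{itemize}
\item Theorem~\ref{thm:affine-flats} only uses the alternative for each \(y=e_i\) and \(y=\one\) in \(\Rp{m}\). It produces exactly \(m\) independent rank-one normals, so the bound \(mn-m\) and the description of the extremal flats as \(\mathcal P_{x,b}\) hold for all \(m,n\).
\item Lemma~\ref{lem:ruling-factorization} applies the rank-one preserver theorem \cite{Lautemann}, which is valid for arbitrary \(m,n\). The transpose alternative can only arise when \(m=n\), and it is excluded there in any case.
\end{itemize}

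So for \(m<n\), Proposition~\ref{prop:algebraic-form} gives \(L(A)=XAY\) directly. \cite[Theorem 2.4]{DGJJT} then fixes the sign pattern. A priori it allows either (\(X\) row positive and \(Y\) inverse nonnegative) or the same for \((-X,-Y)\). Since \(XAY=(-X)A(-Y)\), in the second case we replace \((X,Y)\) by \((-X,-Y)\) to reach the stated normal form.

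There is no real obstacle; the only point needing care is this sign normalization. The representation \(XAY\) is determined only up to the rescaling \(X\mapsto cX\), \(Y\mapsto c^{-1}Y\), and the case \(c=-1\) is the one that turns the \((-X,-Y)\) alternative into the desired one.
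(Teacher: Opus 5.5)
Your proposal is correct and follows the paper's approach: the corollary is just Theorem~\ref{thm:main} specialized to $2\le m<n$, and that theorem is Proposition~\ref{prop:algebraic-form} followed by \cite[Theorem 2.4]{DGJJT}. Your explicit replacement of $(X,Y)$ by $(-X,-Y)$ in the second alternative of \cite[Theorem 2.4]{DGJJT}, justified by $XAY=(-X)A(-Y)$, is a correct detail that the paper's proof of the main theorem leaves implicit.
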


 \section*{Acknowledgments}
 The author would like thank  Sachindranath Jayaraman for his suggestions and comments.
The work of Vatsalkumar N. Mer  was supported by Basic Science Research Program through the National Research Foundation of Korea (NRF) funded by the Ministry of Education, Korea (No. RS-2024-00462498). 

\textbf{Disclosure statement}\\
No potential conflict of interest was reported by the author.

\end{document}